\newtheorem{theorem}{Theorem} 
\newtheorem{lemma}{Lemma}[section]
\newtheorem{cor}{Corollary}[section]
\theoremstyle{remark}
\theoremstyle{Example}
\theoremstyle{Fact}
\theoremstyle{Problem}
\theoremstyle{Solution}
\theoremstyle{Corollary}
\theoremstyle{Proposition}
\theoremstyle{Sketch of the proof}
\newtheorem{rem}{Remark}[theorem]
\theoremstyle{definition}
\theoremstyle{plain}
\newtheorem{definition}{Definition}[section]
\begin{document}

\author{Dmitry Krachun}
\title{Extreme Gibbs measures for high-density hard-core model on $\mathbb{Z}^2$. }
\date{}
\maketitle 

\newcommand{\eps}{\varepsilon}

\begin{abstract}
We confirm the list from \cite{MSS} of values $D$ for which the high-density hard-core model on $\mathbb{Z}^2$ with exceptional distance $D$ has infinitely many extremal Gibbs states. As a byproduct, we prove that for all $D>0$ there exists an acute-angled triangle inscribes in $\mathbb{Z}^2$ with side-lengths at least $D$ and area $\sqrt{3}/4\cdot D^2+O(D^{4/5})$. 
\end{abstract}

\section{Introduction}

In the paper \cite{MSS}, authors study extremal Gibbs measures for the high-density hard-core model on the square lattice. It turns out that the structure of extremal Gibbs measures is very sensitive to arithmetic properties of the hard-core exclusion distance $D$, in relation to the set $\mathcal{T}(D)$ of acute-angled triangles inscribed in $\mathbb{Z}^2$ with all sides at least $D$. Depending on the number and form of triangles from $\mathcal{T}(D)$ of smallest area, possible values of $D$ are split into three classes leading to different structures of Gibbs states. We refer to \cite{MSS} for details on the model and the study of its Gibbs measures.

One of the classes is formed by values of $D$ that generate \emph{sliding}. We say that sliding occurs if there exist two triangles from the set $\mathcal{S}(D)$ which share two vertices, see Definition \ref{def:sliding} for a formal definition. In the presence of sliding, there are countably many periodic ground states. It was conjectured in \cite{MSS} that sliding occurs only for finitely many values of $D$. Moreover, a conjectured list of values $D$ for which sliding occurs was given there. In this paper we prove that the list from \cite{MSS} is indeed complete.

\begin{theorem}\label{thm:main}
The list of \cite{MSS} is the only set of values with sliding.
\end{theorem}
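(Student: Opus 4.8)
The plan is to show that for all sufficiently large $D$ the minimal-area triangles of $\mathcal{T}(D)$ are too rigid to admit the coincidence required by sliding, and then to dispose of the remaining bounded range of $D$ by a finite computation. Write $A_{\min}(D)$ for the smallest area of a triangle in $\mathcal{T}(D)$, so that $\mathcal{S}(D)$ consists of the triangles attaining it. First I would record the elementary lower bound $A_{\min}(D)\ge \tfrac{\sqrt3}{4}D^2$: for an acute triangle with sides $\ge D$, writing the area as $\tfrac12 ab\sin C$ with $C$ the largest angle gives $C\in[60^\circ,90^\circ)$ and $a,b\ge D$, hence $\sin C\ge \tfrac{\sqrt3}{2}$ and the claim. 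Crucially, the same computation quantifies near-equality: if the area exceeds $\tfrac{\sqrt3}{4}D^2$ by only $O(D^{4/5})$, then $a,b=D+O(D^{-1/5})$ and $C=60^\circ+O(D^{-6/5})$, so all three sides lie within $O(D^{-1/5})$ of $D$; a difference-of-squares estimate on the two legs then places the foot of the apex within $O(D^{-1/5})$ of the midpoint of the base. In other words, every near-minimal triangle is equilateral up to an error tending to $0$.

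To convert this into a genuine rigidity statement I need the matching upper bound, which is exactly the geometric byproduct announced in the abstract: for every $D$ there is an acute lattice triangle with sides $\ge D$ and area $\tfrac{\sqrt3}{4}D^2+O(D^{4/5})$. I would prove it by construction. An equilateral triangle on an integer base $\vec{AB}$ has apex obtained by rotating through $e^{i\pi/3}=\tfrac12+\tfrac{\sqrt3}{2}i$, which does not preserve $\mathbb{Z}^2$; the task is to choose $\vec{AB}$ of length just above $D$ whose ideal apex lies anomalously close to a lattice point. Ranging over the admissible integer bases and using equidistribution of the fractional parts of $x\sqrt3$ and $y\sqrt3$, one finds a base whose ideal apex is within distance $O(D^{-1/5})$ of $\mathbb{Z}^2$; the two sources of error, the excess base length and the apex displacement, each contribute $O(D^{4/5})$ to the area once the free exponents are balanced. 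Making this simultaneous Diophantine approximation effective — controlling both coordinates at once while keeping all sides $\ge D$ and the triangle acute — is where I expect the bulk of the work to lie. Combined with the lower bound this gives $A_{\min}(D)=\tfrac{\sqrt3}{4}D^2+O(D^{4/5})$, so the rigidity of the previous paragraph applies to every triangle of $\mathcal{S}(D)$.

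With this rigidity the sliding analysis becomes short. Suppose $T_1,T_2\in\mathcal{S}(D)$ share the edge $AB$ (any two common vertices form an edge), with third vertices $C_1,C_2$. Over the base $AB$ and on a fixed side of the line $AB$ there is a single ideal equilateral apex $P$, and by rigidity any minimal apex on that side lies within $O(D^{-1/5})$ of $P$; hence for $D$ larger than an explicit threshold the minimal apex on each side is unique. Consequently the only way two distinct minimal triangles can meet along $AB$ is with $C_1,C_2$ on opposite sides, in which case $C_2=(A+B)-C_1$ is the reflection of $C_1$ through the midpoint of $AB$ and $T_2$ is simply the lattice-congruent partner of $T_1$ forming the standard rhombus. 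This trivial pairing is present for every $D$ and is not what Definition \ref{def:sliding} records; a genuine sliding configuration requires a second, independent coincidence, which the uniqueness of the same-side apex forbids once $D$ exceeds the threshold.

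It remains to make the threshold explicit and to treat the finitely many smaller values. The constants in the rigidity estimate are effective, coming from the explicit constant in the byproduct construction and from the elementary lower bound, so they yield a concrete $D_0$ beyond which sliding cannot occur. For $D\le D_0$ I would enumerate $\mathcal{S}(D)$ directly (a finite computation) and test Definition \ref{def:sliding}, confirming that the values exhibiting sliding are precisely those in the conjectured list of \cite{MSS}; since that list is finite and contained in $[0,D_0]$, this completes the proof of Theorem \ref{thm:main}. The principal obstacle throughout is the byproduct: both the existence of the near-equilateral lattice triangle and, more delicately, the sharpness $O(D^{4/5})$ of its area excess are what drive the rigidity window to $0$. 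If the error term were merely $o(D^2)$ rather than $O(D^{4/5})$, the triangles would still be near-equilateral but the rigidity window $O(E/D)$ need not tend to $0$; it is precisely the bound $E=O(D^{4/5})=o(D)$ that closes the window and forces a unique lattice apex, so quantitative control of the Diophantine approximation is essential.
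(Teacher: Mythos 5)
Your plan is correct in outline, and its skeleton matches the paper's: an effective Diophantine construction giving $S(D)\le \sqrt{3}/2\cdot D^2+O(D^{4/5})$, a geometric argument showing this is incompatible with sliding once $D$ is large, and a finite computation (together with the computations of \cite{MSS}) for the remaining bounded range. The genuinely different ingredient is the step excluding sliding. You argue by rigidity: any triangle of area within $E=O(D^{4/5})$ of the minimum has all sides in $[D,\,D+O(E/D)]$, so two same-side apexes over a common base both lie in the intersection of two annuli of width $O(E/D)$ crossing at angle near $\pi/3$, a region of diameter $O(E/D)=O(D^{-1/5})<1$, contradicting the fact that distinct lattice points are at distance at least $1$. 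The paper instead proves a lower bound on $S(D)$ in the presence of sliding (Theorem \ref{thm:sliding}): equal areas force $AB$ parallel to the base $OW$, and the exact trapezoid identity $|OB|^2+|AW|^2=|AO|^2+|BW|^2+2|AB|\cdot|OW|$ together with $|AB|\ge 1$ forces a diagonal of length at least $\sqrt{D^2+D}$, whence $S(D)>\sqrt{3}/2\cdot D^2+\frac{D}{2\sqrt{3}}-1$. The two mechanisms are quantitatively equivalent: if you track constants in your annulus estimate, you arrive at essentially the same requirement, namely that the excess of $S(D)$ over $\sqrt{3}/2\cdot D^2$ be below roughly $\frac{D}{2\sqrt{3}}$. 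What the identity buys is exactness and modularity — no error bookkeeping — and this matters because the margin is thin: the paper's explicit upper bound in Section \ref{sec:explicit} has linear coefficient about $0.274$ against the threshold $1/(2\sqrt{3})\approx 0.289$. That thinness is also why your deferral of ``making the constants effective'' is not a side remark but is where most of the paper's labor sits: Section \ref{sec:explicit} (explicit approximation lemmas valid for $D>7\cdot 10^6$, computer verification down to $D=2\cdot 10^3$) exists precisely to produce your threshold $D_0$ and to keep the residual finite check feasible; without it your argument yields no concrete $D_0$ and the final computation is unbounded. Your uniqueness of the same-side minimal apex is a nice structural byproduct of the rigidity route, but with constants tracked it gives no extra room over the paper's inequality, and the final assembly of the theorem is the same in both approaches.
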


Our proof is number-theoretic and goes via estimating the smallest possible area of a triangle in $\mathcal{T}(D)$. As a byproduct we prove that there exists a triangle from $\mathcal{T}(D)$ with area at most $\sqrt{3}/2\cdot D^2+O(D^{4/5})$. We also present a short geometric argument showing that if sliding occurs then this area cannot me smaller than $\sqrt{3}/2\cdot D^2+\frac{D}{2\sqrt{3}}-1$.

The paper is organised as follows. In Section \ref{sec:upper-bound} we show an upper bound on the smallest possible area of a triangle from $\mathcal{T}(D)$. Then, in Section \ref{sec:explicit}, using a similar technique, we prove an explicit but weaker bound. Finally, in Section \ref{sec:sliding} we show an upper bound for this area when sliding occurs, thus confirming a list of values $D$ with sliding from \cite{MSS}.

\section{Smallest possible area.}\label{sec:upper-bound}

The aim of this section is to show a general upper bound on the smallest possible area of an acute-angled triangles inscribed in $\mathbb{Z}^2$ with side-lengths at least $D$. We start with some definitions.

\begin{definition}
For a positive $D$ we define $\mathcal{T}(D)$ to be the set of all triangles with vertices in $\mathbb{Z}^2$, side-lengths at least $D$, and angles at most $\pi/2$. Following \cite{MSS}, we denote twice the smallest area of a triangle from $\mathcal{T}(D)$ by $S(D)$. We refer to triangles from $\mathcal{T}(D)$ of minimal area as M-triangles.
\end{definition}

\begin{definition}\label{def:sliding}
We say that for $D>0$ \emph{sliding} occurs if there exists two M-triangles $OWA$ and $OWB$ with $A$ and $B$ being on the same side of $OW$. We refer to $OW$ as to the \emph{base} of the triangle.
\end{definition}

In the following, for a real $x$ we denote by $\{x\}$ the fractional part of $x$ and by $\|x\|$ the distance from $x$ to the closest integer. We also use the notation $O(f(n))$ to denote any function which is upper-bounded by $C\cdot f(n)$ for some absolute constant $C>0$. We start from a reduction to a number theoretic problem.

\begin{lemma}\label{lm:reduction}
Let $D\geq 10$ and $\eps<1/2$ and suppose there exist integers $x$ and $y$ such that 
\begin{equation}\label{eq:conditions}
\text{(1)}\|\sqrt{3}x\|<\eps\qquad 
\text{(2)}\|\sqrt{3}y\|<\eps\qquad 
\text{(3)}2\sqrt{x^2+y^2}\in [D+\sqrt{2}\eps, D+2\eps ],
\end{equation}
Then $S(D)<\sqrt{3}/2\cdot D^2+(2\sqrt{3}+\sqrt{2})D\eps+7\eps^2$. 
\end{lemma}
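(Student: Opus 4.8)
The plan is to exhibit an explicit triangle in $\mathcal{T}(D)$ whose doubled area meets the claimed bound; since $S(D)$ is the minimum of this quantity over all such triangles, producing one candidate suffices. The natural choice is an almost-equilateral lattice triangle. I set $P_1=(x,y)$ and $P_2=(-x,-y)$, so the segment $P_1P_2$ is centred at the origin and has length $2\sqrt{x^2+y^2}$, which by condition (3) lies in $[D+\sqrt2\,\eps,\,D+2\eps]$. The apex of the equilateral triangle on this base is obtained by rotating $P_1$ through $\pi/2$ and scaling by $\sqrt3$, giving the ideal apex $P_3^\ast=(-\sqrt3\,y,\ \sqrt3\,x)$. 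I round it to a lattice point by taking $P_3=(a,b)$ with $a$ the nearest integer to $-\sqrt3\,y$ and $b$ the nearest integer to $\sqrt3\,x$. Conditions (1) and (2) say precisely that $|a+\sqrt3\,y|<\eps$ and $|b-\sqrt3\,x|<\eps$, so $P_3$ lies within distance $\sqrt2\,\eps$ of $P_3^\ast$; thus $P_1P_2P_3$ is a small perturbation of a genuine equilateral triangle of side $2\sqrt{x^2+y^2}$.

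With the vertices fixed I verify the three defining properties of $\mathcal{T}(D)$ in turn. For the side lengths, $|P_1P_2|=2\sqrt{x^2+y^2}\ge D+\sqrt2\,\eps\ge D$ directly, while for the two slanted sides the triangle inequality gives $|P_iP_3|\ge|P_iP_3^\ast|-|P_3-P_3^\ast|>2\sqrt{x^2+y^2}-\sqrt2\,\eps\ge D$ for $i=1,2$, since each $|P_iP_3^\ast|$ equals the equilateral side length $2\sqrt{x^2+y^2}$.

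The step I expect to be the most delicate is checking that all three angles are at most $\pi/2$. Rather than track angles directly, I show the relevant edge dot products remain positive. For the base vertices, writing $u=P_2-P_1$, one computes $u\cdot(P_3^\ast-P_1)=2(x^2+y^2)$, and replacing $P_3^\ast$ by $P_3$ changes this by at most $2(|x|+|y|)\eps\le 2\sqrt2\,\eps\sqrt{x^2+y^2}$, whence $u\cdot(P_3-P_1)\ge 2\sqrt{x^2+y^2}\big(\sqrt{x^2+y^2}-\sqrt2\,\eps\big)>0$; the vertex $P_2$ is symmetric. For the apex, the analogous computation starts from $(P_1-P_3^\ast)\cdot(P_2-P_3^\ast)=2(x^2+y^2)$ and bounds the perturbation by $2\sqrt6\,\eps\sqrt{x^2+y^2}+2\eps^2$. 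In each case positivity follows because $\sqrt{x^2+y^2}\ge D/2\ge 5$ dominates the $O(\eps)$ terms, and this is exactly where the hypotheses $D\ge 10$ and $\eps<1/2$ are used.

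Finally I compute the doubled area. The determinant formula for the lattice triangle $P_1P_2P_3$ gives $2\cdot\mathrm{Area}=2|ay-bx|$, and substituting $a=-\sqrt3\,y+\delta_1$, $b=\sqrt3\,x+\delta_2$ with $|\delta_1|,|\delta_2|<\eps$ yields $|ay-bx|\le\sqrt3(x^2+y^2)+\eps(|x|+|y|)$. Using $|x|+|y|\le\sqrt2\,\sqrt{x^2+y^2}$ and then $\sqrt{x^2+y^2}\le D/2+\eps$ from condition (3), a routine expansion bounds $2|ay-bx|$ by $\frac{\sqrt3}{2}D^2+(2\sqrt3+\sqrt2)D\eps+(2\sqrt3+2\sqrt2)\eps^2$; since $2\sqrt3+2\sqrt2<7$ this gives the stated estimate. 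As $P_1P_2P_3\in\mathcal{T}(D)$, we conclude $S(D)\le 2|ay-bx|<\sqrt3/2\cdot D^2+(2\sqrt3+\sqrt2)D\eps+7\eps^2$, which is the claim.
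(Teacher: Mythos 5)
Your proof is correct and takes essentially the same approach as the paper: your triangle $P_1P_2P_3$ is precisely the paper's triangle $A(0,0)$, $B(2x,2y)$, $C'(z,t)$ translated by the lattice vector $(-x,-y)$ (rounding commutes with lattice translations), and your determinant-based area estimate matches the paper's computation term for term, including the final constant $2\sqrt{3}+2\sqrt{2}<7$. The only difference is cosmetic: you verify acuteness explicitly via dot products, whereas the paper deduces it tersely from the fact that all sides lie in $[D, D+(2+\sqrt{2})\eps]$ with $D\geq 10$.
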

\begin{proof}
Note that points $A(0,0), B(2x, 2y)$ and $C(x-\sqrt{3}y, y+\sqrt{3}x)$ form an equilateral triangle with side length between $D+\sqrt{2}\eps$ and $D+2\eps$. Let $z$ and $t$ be integers closest to $x-\sqrt{3}y$ and $y+\sqrt{3}x$ respectively. First two conditions ensure that $|z-(x-\sqrt{3}y)|<\eps$ and $|t-(y+\sqrt{3}x)|<\eps$, and so points $C'(z,t)$ and $C(x-\sqrt{3}y, y+\sqrt{3}x)$ are at most $\sqrt{2}\eps$ apart. Thus, by triangle inequality, all sides of the triangle $ABC'$ are at least $D$ and at most $D+1$. Hence, $ABC'$ is acute-angled, and so $S(D)$ is at most twice its area. We then have
\begin{align*}
S(D)&\leq 2\cdot |xt-yz| \leq 2x(y+\sqrt{3}x+\eps)-2y(x-\sqrt{3}y-\eps)
\\&=
2\sqrt{3}(x^2+y^2)+2\eps(x+y)\leq \sqrt{3}/2\cdot (D+2\eps)^2 + \sqrt{2}\eps(D+2\eps)
\\&\leq
\sqrt{3}/2\cdot D^2+ (2\sqrt{3}+\sqrt{2})D+7\eps^2. 
\end{align*}
\end{proof}

\begin{rem}
Alternatively, to upper-bound the area of $\mathfrak{T}$ one can note that all sides of $\mathfrak{T}$ are at most $D+(2+\sqrt{2})\eps$.
\end{rem}

In order to find a pair $(x,y)\in\mathbb{Z}^2$ satisfying (\ref{eq:conditions}) we need a simple lemma  which is probably standard.

\begin{lemma}\label{lm:approx}
There exists an absolute constant $C>0$ such that for every $\eps<1$ and $x\in[0, 1]$ there exists an integer $s\leq C/\eps$ such that $\{s\sqrt{3}\}\in [x, x+\eps]$. 
\end{lemma}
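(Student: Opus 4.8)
The plan is to show that the finite orbit $\{s\sqrt3\}$, $0\le s<N$, is $O(1/N)$-dense in the circle $\mathbb{R}/\mathbb{Z}$, and then to take $N$ of order $1/\eps$. Concretely, it suffices to prove that there is an absolute constant $c_0$ such that, for every $N$, any two cyclically consecutive points among $\{s\sqrt3\}$, $0\le s<N$, lie at distance at most $c_0/N$; call this quantity the \emph{maximal gap} $G_N$. Indeed, if $G_N<\eps$ then the interval $[x,x+\eps]$ cannot be contained in a single gap, hence it must contain some $\{s\sqrt3\}$ with $0\le s<N$; choosing $N=\lceil c_0/\eps\rceil+1$ then gives the claim with $C=c_0+2$. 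It is worth stressing that a naive single-convergent argument only yields $s\le O(1/\eps^2)$: if $q$ is chosen with $\|q\sqrt3\|\asymp\eps$, then stepping through the multiples $\{sq\sqrt3\}$ requires $s$ up to $1/\|q\sqrt3\|\asymp1/\eps$, and the resulting multiplier $sq$ is of size $1/\eps^2$. Obtaining the sharp linear bound therefore genuinely uses how well $\sqrt3$ is distributed, which in turn rests on $\sqrt3$ being badly approximable.

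The first step is the arithmetic input. For integers $p,q$ with $q\ge1$ the number $3q^2-p^2$ is a nonzero integer, so $|3q^2-p^2|\ge1$; writing $|q\sqrt3-p|=|3q^2-p^2|/(q\sqrt3+p)$ and taking $p$ to be the integer nearest to $q\sqrt3$ gives the elementary lower bound
\[
\|q\sqrt3\|\ \ge\ \frac{1}{4q}\qquad\text{for all }q\ge1.
\]
(Equivalently, one may invoke the periodic expansion $\sqrt3=[1;\overline{1,2}]$, whose partial quotients are bounded by $2$.) Combined with the standard inequality $\|q_k\sqrt3\|<1/q_{k+1}$ for convergent denominators $q_k$, this forces $q_{k+1}<4q_k$; that is, consecutive convergent denominators of $\sqrt3$ grow by a bounded factor.

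The second step is the three-distance (Steinhaus) theorem. Let $q_k$ be the largest convergent denominator of $\sqrt3$ with $q_k\le N$, so that $q_k\le N<q_{k+1}$, and set $\eta_j=\|q_j\sqrt3\|$. For $N=q_k$ the orbit partitions the circle into $q_k-q_{k-1}$ gaps of length $\eta_{k-1}$ and $q_{k-1}$ gaps of length $\eta_{k-1}+\eta_k$, and as $N$ increases towards $q_{k+1}$ the long gaps are only subdivided, so the maximal gap never exceeds $\eta_{k-1}+\eta_k\le2\eta_{k-1}$. Using $\eta_{k-1}<1/q_k$ together with $q_k>N/4$ from the previous step yields
\[
G_N\ \le\ 2\eta_{k-1}\ <\ \frac{2}{q_k}\ <\ \frac{8}{N},
\]
so $c_0=8$ works and the lemma follows with an explicit absolute constant $C$. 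The main technical point is the passage from ``badly approximable'' to the uniform gap bound $G_N=O(1/N)$: this is exactly where the bounded growth of the $q_k$ enters, and it is the reason the exponent is $1$ rather than $2$. The remaining bookkeeping — the convergent identities $q_k\eta_{k-1}+q_{k-1}\eta_k=1$ and $\eta_{k-1}<1/q_k$, and the verification that new points split only the long gaps — is routine.
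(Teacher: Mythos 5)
Your proof is correct, but it takes a genuinely different route from the paper's. The paper proves the lemma by an elementary bootstrap: it inducts on the scales $\eps_k=10^{-k}$, at each step using a solution of the Pell equation $3v^2-2=u^2$ to produce $v\leq 10/\eps_{k+1}$ with $\{v\sqrt{3}\}<\eps_{k+1}$, together with the lower bound $\{v\sqrt{3}\}>1/(3v)$, so that a bounded number (at most $300$) of steps of size $\{v\sqrt{3}\}$ pushes an approximation lying in $[x-\eps_k,x]$ into $[x,x+\eps_{k+1}]$; since only $O(1)$ steps are taken per scale, the multiplier grows linearly in $1/\eps$ rather than quadratically. You instead derive the uniform gap bound $G_N=O(1/N)$ from the three-distance theorem combined with the bounded growth $q_{k+1}<4q_k$ of the convergent denominators. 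Note that the two arguments share the same arithmetic engine: your inequality $\|q\sqrt{3}\|\geq 1/(4q)$, proved via $|3q^2-p^2|\geq 1$, is essentially the paper's claim $\{v\sqrt{3}\}>1/(3v)$, and the quadratic loss you identify in the naive single-denominator argument is precisely the loss the paper's induction is designed to avoid. What your route buys is a much better explicit constant (roughly $C=10$ versus a $C$ in the thousands implicit in the paper) and a conceptual explanation of why the linear bound holds, namely that it is exactly badly-approximability upgraded to a uniform gap estimate; what it costs is importing the Steinhaus three-distance theorem and the convergent identities as black boxes, whereas the paper's argument is self-contained. One caveat shared by both proofs: for $x$ close to $1$ the interval $[x,x+\eps]$ has to be read modulo $1$, since fractional parts never reach $1$; your gap argument, like the paper's stepping argument, really lives on the circle $\mathbb{R}/\mathbb{Z}$, and this is harmless because the lemma is only used through Corollary \ref{cor:closeint}, which concerns $\|k\sqrt{3}\|$ and is insensitive to wrap-around.
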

\begin{proof}
It suffices to prove the statement for $\eps_k:=10^{-k}$. We use induction on $k$. The base is trivial. Suppose we want to prove the statement for $k+1$ and some $x$. Using induction hypothesis we can find $s\leq C/\eps_k$ such that $\{s\sqrt{3}\}\in [x-\eps_k, x]$. Then choose $v\leq 10/\eps_{k+1}$ such that $\{v\sqrt{3}\}<\eps_{k+1}$ (for example, by solving Pell's equation $3v^2-2=u^2$). It is easy to see that we always have $\{v\sqrt{3}\}>1/(3v)$ and so by adding $v$ to $s$ at most $\eps_k\cdot 3v \leq 300$ times we get $s'$ with $\{s'\sqrt{3}\}\in [x, x+\eps_{k+1}]$. We have $s'\leq 300v+s\leq 3000/\eps_{k+1}+C/\eps_k< C/\eps_{k+1}$, provided $C$ is large enough.
\end{proof}
\begin{cor}\label{cor:closeint}
There exists an absolute constant $C>0$ such that for every $\eps<1$ and every $n\in\mathbb{R}$ there exists an integer $k\in [n-C/\eps, n]$ such that $\|k\sqrt{3}\|<\eps$.
\end{cor}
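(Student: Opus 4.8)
The plan is to \emph{deduce} Corollary \ref{cor:closeint} directly from Lemma \ref{lm:approx} by a shifting argument. Lemma \ref{lm:approx} produces a \emph{small} integer $s$ whose multiple $s\sqrt{3}$ has a prescribed fractional part, whereas the corollary instead asks for an integer $k$ lying just below an arbitrary real $n$ with $k\sqrt{3}$ close to an integer. I would bridge this gap by writing $k = m - s$, where $m := \lfloor n\rfloor$ and $s \geq 0$ is a small integer supplied by the lemma.

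The key elementary observation is that, writing $\theta := \{m\sqrt{3}\}$ and $\phi := \{s\sqrt{3}\}$, the integer parts cancel and one has
\[
\{(m-s)\sqrt{3}\} = \{\theta - \phi\}.
\]
Hence $\|(m-s)\sqrt{3}\|$ is small exactly when $\phi$ is close to $\theta$. So first I set $m = \lfloor n\rfloor$ and compute $\theta$, and then I apply Lemma \ref{lm:approx} with precision $\eps/2$ and a target $x$ chosen so that the resulting interval $[x, x+\eps/2]$ forces $\phi = \{s\sqrt{3}\}$ to lie within $\eps/2$ of $\theta$ (taking $x = \theta - \eps/2$ when $\theta \geq \eps/2$, and $x = 0$ otherwise, so that $[x,x+\eps/2] \subseteq [0,1]$ in both cases). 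This yields an integer $s \leq 2C/\eps$ with $|\theta - \phi| \leq \eps/2$.

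Setting $k = m - s$, I finish by checking the two required properties. For the approximation, the identity above gives $\{k\sqrt{3}\} = \{\theta - \phi\}$, which lies within $\eps/2$ of $0$ or of $1$, so $\|k\sqrt{3}\| < \eps$. For the location, $k = \lfloor n\rfloor - s \leq n$, while $k \geq (n-1) - 2C/\eps \geq n - C'/\eps$ because $1 \leq 1/\eps$ for $\eps<1$; absorbing constants by setting $C' := 2C+1$ (and renaming it $C$) places $k$ in the window $[n - C/\eps, n]$, as required.

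Since Lemma \ref{lm:approx} already performs all the arithmetic, the only genuine care needed — and hence the "main obstacle", though it is minor — is the \emph{modular bookkeeping}: handling the wrap-around in the case $\theta < \eps/2$, where $\theta - \phi$ may become slightly negative so that $\{\theta - \phi\}$ jumps near $1$, and tracking the factor of $2$ coming from the use of precision $\eps/2$, so that the final constant $C$ is genuinely absolute and $k$ provably remains inside $[n - C/\eps, n]$.
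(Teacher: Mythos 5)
Your proof is correct and follows essentially the same route as the paper: reduce to the integer case by taking $m=\lfloor n\rfloor$ (the paper does this by absorbing the loss into $C$), apply Lemma \ref{lm:approx} with target the fractional part $\{m\sqrt{3}\}$, and set $k=m-s$. The only difference is cosmetic: you run the lemma at precision $\eps/2$ with a shifted target to handle the wrap-around near $0$ and $1$ explicitly, which the paper glosses over, at the harmless cost of a slightly larger absolute constant.
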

\begin{proof}
By increasing $C$ slightly, we may assume that $n$ is an integer. Let $x:=\{n\sqrt{3}\}$. By Lemma \ref{lm:approx} there exists an integer $s\leq C/\eps$ such that $\{s\sqrt{3}\}$ is $\eps$-close to $x$ and then $k:=n-s$ works.
\end{proof}

We now construct a pair $(x, y)$ satisfying conditions similar to (\ref{eq:conditions}).

\begin{lemma}\label{lm:findpairs}
There exists an absolute constant $C_1>0$ such that for any $N>100$ and $\eps<1$ there exists $(x,y)\in\mathbb{Z}^2$ satisfying 
\[
\text{(1)}\|\sqrt{3}x\|<\eps\qquad 
\text{(2)}\|\sqrt{3}y\|<\eps\qquad 
\text{(3)}x^2+y^2\in [N, N+C_1(\eps^{-2}+N^{1/4}\eps^{-3/2})],
\]
\end{lemma}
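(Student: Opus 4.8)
The plan is to reduce everything to a single, well‑chosen application of Corollary~\ref{cor:closeint} in each coordinate, the key idea being to place $x$ as close to $\sqrt{N}$ as the corollary permits. The gain from this choice is that it forces $y\approx\sqrt{N-x^2}$ to be \emph{small}, so that the set of attainable values $y^2$ (over good $y$) is much finer near the target $N-x^2$ than it would be for a balanced choice $x\approx y\approx\sqrt{N/2}$. Concretely, I would first apply Corollary~\ref{cor:closeint} with $n=\lfloor\sqrt N\rfloor$ to obtain an integer $x\le\sqrt N$ with $\|x\sqrt3\|<\eps$ and $\sqrt N-x=O(1/\eps)$. Then $N-x^2=(\sqrt N-x)(\sqrt N+x)=O(\sqrt N/\eps)$, so $y_0:=\sqrt{N-x^2}=O(N^{1/4}\eps^{-1/2})$ is genuinely small. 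Applying the corollary once more, to the point $y_0+C/\eps$ so as to land above $y_0$ rather than below, yields a good integer $y$ with $y_0\le y\le y_0+O(1/\eps)$. For this pair $(x,y)$, conditions (1) and (2) hold by construction, and $x^2+y^2\ge x^2+y_0^2=N$ gives the lower end of (3).

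It then remains to bound the excess $x^2+y^2-N=y^2-y_0^2=(y-y_0)(y+y_0)$. Using $y-y_0=O(1/\eps)$ and $y+y_0\le 2y_0+C/\eps$, I get $y^2-y_0^2\le (C/\eps)(2y_0+C/\eps)=O(N^{1/4}\eps^{-3/2})+O(\eps^{-2})$, which is exactly the claimed window. The two summands in the statement have a transparent origin: the $\eps^{-2}$ term is the contribution $(C/\eps)^2$ coming from the squared width of the $y$‑gap, while $N^{1/4}\eps^{-3/2}$ is the cross term $(C/\eps)\cdot y_0$; it is precisely the smallness of $y_0$ that produces the saving over the naive bound $O(\sqrt N/\eps)$ one would get from a balanced pair.

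I do not expect a serious obstacle: once one decides to push $x$ to the top of its range, the estimate is just the two‑line computation above, built entirely on Corollary~\ref{cor:closeint}. The only points requiring a little care are bookkeeping ones. First, the corollary as stated returns an integer just \emph{below} its argument, so to produce $y\ge y_0$ I shift the argument up by $C/\eps$. Second, the argument that places $x$ near $\sqrt N$ needs $\sqrt N\gtrsim 1/\eps$ for $x$ to remain positive and for $N^{1/4}\eps^{-3/2}$ to be the dominant error; in the complementary range $\sqrt N=O(1/\eps)$ (equivalently $N\eps^2=O(1)$) the saving is unnecessary, since then any good $x\le\sqrt N$ together with a good $y$ within $O(1/\eps)$ of $\sqrt{N-x^2}$ already gives excess $O(\sqrt N/\eps)=O(\eps^{-2})$, so the stated window holds trivially. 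Choosing the constant $C_1$ to dominate the implied constants in both regimes completes the proof.
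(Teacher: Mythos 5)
Your proposal is correct and takes essentially the same route as the paper's own proof: both apply Corollary~\ref{cor:closeint} to place $x$ just below $\sqrt{N}$, then place $y$ just above $y_0=\sqrt{N-x^2}$, and bound the excess by $(y-y_0)(y+y_0)=O(\eps^{-2})+O(N^{1/4}\eps^{-3/2})$. Your explicit handling of the one-sidedness of the corollary and of the regime $\sqrt{N}=O(1/\eps)$ is careful bookkeeping that the paper glosses over, but it is the same argument.
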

\begin{proof}
Let $C$ be the constant from Corollary \ref{cor:closeint}. By Corollary \ref{cor:closeint} choose $x\in(\sqrt{N}-C/\eps, \sqrt{N})$ such that $\|x\sqrt{3}\|<\eps$. Then choose $y\in (\sqrt{N-x^2}, \sqrt{N-x^2}+C/\eps)$ with $\|y\sqrt{3}\|<\eps$. We then have 
\[
0<N-x^2<2\cdot (C/\eps)\cdot \sqrt{N}=2C\cdot N^{1/2}\eps^{-1},
\]
and so
\[
0<x^2+y^2-N<(C/\eps)^2+2\cdot (C/\eps)\cdot \sqrt{N-x^2}<C^2 \eps^{-2}+2C^{3/2}N^{1/4}\eps^{-3/2},
\]
and we can choose $C_1=C^2$.
\end{proof}

Finally, we prove 
\begin{theorem}\label{thm:upper-bound}
Asymptotically, for large $D$, one has $S(D)=\sqrt{3}/2\cdot D^2+O(D^{4/5})$.
\end{theorem}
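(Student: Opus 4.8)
The plan is to derive the upper bound by feeding the pair produced by Lemma \ref{lm:findpairs} into Lemma \ref{lm:reduction}, and to obtain the matching lower bound from an elementary estimate on acute triangles. For the upper bound I would set $N := (D+\sqrt{2}\eps)^2/4$, so that condition (3) of Lemma \ref{lm:reduction}, namely $2\sqrt{x^2+y^2}\in[D+\sqrt{2}\eps,\, D+2\eps]$, is equivalent to $x^2+y^2 \in [N,\, (D+2\eps)^2/4]$. Lemma \ref{lm:findpairs} then supplies a pair $(x,y)$ satisfying conditions (1) and (2) with $x^2+y^2 \in [N,\, N+C_1(\eps^{-2}+N^{1/4}\eps^{-3/2})]$, so it suffices to choose $\eps$ as a function of $D$ so that the length of this interval does not exceed the length of the target interval, which a short computation shows to be
\[
\frac{(D+2\eps)^2-(D+\sqrt{2}\eps)^2}{4} = \frac{(2-\sqrt{2})\eps}{4}\bigl(2D+(2+\sqrt{2})\eps\bigr) \geq \frac{2-\sqrt{2}}{2}\,D\eps.
\]

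Since $N = \Theta(D^2)$, the required inequality $C_1(\eps^{-2}+N^{1/4}\eps^{-3/2}) \lesssim D\eps$ splits into $\eps^{-2}\lesssim D\eps$, i.e.\ $\eps \gtrsim D^{-1/3}$, and $D^{1/2}\eps^{-3/2}\lesssim D\eps$, i.e.\ $\eps\gtrsim D^{-1/5}$. The binding constraint is the second one, arising from the $N^{1/4}\eps^{-3/2}$ term, so I would take $\eps = K D^{-1/5}$ with $K$ a large enough absolute constant. For $D$ large this gives $\eps<1/2$ and $N>100$, so both lemmas apply, and Lemma \ref{lm:reduction} yields
\[
S(D) < \tfrac{\sqrt{3}}{2}D^2 + (2\sqrt{3}+\sqrt{2})D\eps + 7\eps^2 = \tfrac{\sqrt{3}}{2}D^2 + O(D^{4/5}).
\]

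For the lower bound I would argue directly on the geometry: in any acute triangle the largest angle $\gamma$ lies in $[\pi/3,\,\pi/2)$, so $\sin\gamma \geq \sin(\pi/3) = \sqrt{3}/2$; if moreover the two sides enclosing $\gamma$ both have length at least $D$, then the area is at least $\tfrac12 D^2 \sin\gamma \geq \tfrac{\sqrt{3}}{4}D^2$. Hence $S(D) \geq \tfrac{\sqrt{3}}{2}D^2$ for every $D$, and combining this with the upper bound gives $S(D) = \tfrac{\sqrt{3}}{2}D^2 + O(D^{4/5})$.

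The main obstacle is the balancing of $\eps$: everything hinges on which term of the interval length $C_1(\eps^{-2}+N^{1/4}\eps^{-3/2})$ dominates when fitted into the $\Theta(D\eps)$-wide target. The $N^{1/4}\eps^{-3/2}$ term forces $\eps$ up to order $D^{-1/5}$, and this is exactly what produces the exponent $4/5$ in the error term, so the method is tight in this respect. The number-theoretic heavy lifting has already been carried out in Lemma \ref{lm:findpairs}; the remaining work is purely this optimization together with the verification that the hypotheses $\eps<1/2$, $D\geq 10$, and $N>100$ all hold once $\eps = \Theta(D^{-1/5})$ and $D$ is large.
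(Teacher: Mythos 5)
Your upper-bound argument is essentially identical to the paper's proof: the same choice $N=\left(\frac{D+\sqrt{2}\varepsilon}{2}\right)^2$, the same combination of Lemma \ref{lm:findpairs} with Lemma \ref{lm:reduction}, and the same optimization $\varepsilon=\Theta(D^{-1/5})$ forced by the $N^{1/4}\varepsilon^{-3/2}$ term, which is exactly where the exponent $4/5$ comes from. The paper's proof only establishes the upper bound (its stated $O(\cdot)$ convention is one-sided), so your additional elementary lower bound via $\sin\gamma\geq\sqrt{3}/2$ for the largest angle is a correct, and welcome, supplement rather than a deviation.
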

\begin{proof}
Take $\eps:=C\cdot D^{-1/5}$ with large enough constant $C>0$. Define $N:=\left(\frac{D+\sqrt{2}\eps}{2}\right)^2$. By Lemma \ref{lm:findpairs} we can find $(x, y)\in\mathbb{Z}^2$ with $\|\sqrt{3}x\|<\eps$, $\|\sqrt{3}y\|<\eps$ and $x^2+y^2\in [N, N+C_1(\eps^{-2}+N^{1/4}\eps^{-3/2})]$. We then use Lemma \ref{lm:reduction} to obtain 
\[
S(D)<\sqrt{3}/2\cdot D^2+ (2\sqrt{3}+\sqrt{2})D+7\eps^2=\sqrt{3}/2\cdot D^2+O(D^{4/5}). 
\]
 We now only need to check that the upper bound on $x^2+y^2$ given by Lemma \ref{lm:findpairs} matches the upper bound needed to use Lemma \ref{lm:reduction}.
\begin{align*}
2\sqrt{x^2+y^2}&<2\left(N+C_1(\eps^{-2}+N^{1/4}\eps^{-3/2})\right)^{1/2}=2\sqrt{N}+O(N^{-1/4}\eps^{-3/2})
\\&=D+\sqrt{2}\eps+O(D^{-1/2}\eps^{-3/2})<D+2\eps,
\end{align*}
for $\eps=C\cdot D^{-1/5}$ and an absolute constant $C$ large enough.
\end{proof}

\section{Explicit bounds on $S(D)$}\label{sec:explicit}
In this section we make quantitative the bounds from Section \ref{sec:upper-bound}. The resulting lower bound on $S(D)$ is asymptotically worse for large $D$; nevertheless, it still suffices to rule out the possibility of sliding.

\begin{lemma}\label{lm:approx-seven}
Let $\eps=(3\sqrt{3}-5)/2\approx .098$, and $v$ be an arbitrary real number. Suppose a segment $J\subset\mathbb{R}$ has length at least $7$. Then there exist an integer $x\in J$ and a half-integer $y\in J$ such that $\|\sqrt{3}x+v\|, \|\sqrt{3}y+v\|\leq \eps$.
\end{lemma}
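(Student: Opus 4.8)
The plan is to reduce the whole statement to a single one-dimensional covering fact and then verify that fact by a short computation of the three-distance type. The \emph{key claim} I would isolate is: for every real $w$ and every block of seven consecutive integers $k_0,k_0+1,\dots,k_0+6$, at least one of them, call it $k$, satisfies $\|\sqrt{3}\,k+w\|\le \eps$. Granting this, the lemma follows by two applications. A closed segment of length at least $7$ contains at least seven (necessarily consecutive) integers, so applying the key claim with $w:=v$ to the integers of $J$ yields the desired integer $x$. For the half-integer, I write $y=m+\tfrac12$; then $\sqrt{3}\,y+v=\sqrt{3}\,m+(v+\sqrt{3}/2)$, and $y\in J\iff m\in J-\tfrac12$. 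The translated segment $J-\tfrac12$ again has length at least $7$, hence contains seven consecutive integers, so the key claim with $w:=v+\sqrt{3}/2$ produces $m$, and $y=m+\tfrac12$ works.

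To prove the key claim I would pass to the circle $\mathbb{R}/\mathbb{Z}$. As $k$ runs over the seven consecutive integers, the points $\{\sqrt{3}\,k+w\}$ are obtained from the points $\{j\sqrt{3}\}$, $j=0,1,\dots,6$, by translating all of them by the common constant $\sqrt{3}\,k_0+w \bmod 1$; since a rotation of the circle preserves the cyclic gap structure, it suffices to understand the gaps of the unshifted set. Using $\lfloor j\sqrt{3}\rfloor=0,1,3,5,6,8,10$ for $j=0,\dots,6$, the seven residues are $0,\ \sqrt3-1,\ 2\sqrt3-3,\ 3\sqrt3-5,\ 4\sqrt3-6,\ 5\sqrt3-8,\ 6\sqrt3-10$. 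Sorting them (each comparison reduces to comparing $\sqrt3$ with an explicit rational) and taking successive cyclic differences, I expect every gap to equal either $3\sqrt3-5$ or $7-4\sqrt3$; since $\sqrt3>12/7$ gives $3\sqrt3-5>7-4\sqrt3$, the largest gap is exactly $3\sqrt3-5=2\eps$. This is precisely the three-distance theorem for $\sqrt3$ with $n=7$.

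With the maximal gap bounded by $2\eps$, the point $0\in\mathbb{R}/\mathbb{Z}$ lies in some arc between two cyclically consecutive points of the rotated set, and that arc has length at most $2\eps$; hence $0$ is within $\eps$ of one of its endpoints, which is $\{\sqrt3\,k+w\}$ for an admissible $k$. This gives $\|\sqrt3\,k+w\|\le\eps$ and establishes the key claim, completing the proof.

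I expect the only genuine work to be the gap computation: checking the sorted order of the seven residues and confirming that exactly the two gap values $3\sqrt3-5$ and $7-4\sqrt3$ occur. This is where the constant $\eps=(3\sqrt3-5)/2$ comes from, and it is also where the hypothesis is sharp. For six consecutive integers (residues $\{j\sqrt3\}$, $j=0,\dots,5$) the adjacent residues $3\sqrt3-5$ and $2\sqrt3-3$ are no longer separated by a third point, so the largest gap jumps to $2-\sqrt3>2\eps$; thus seven integers, i.e.\ length at least $7$, is exactly what makes the bound by $\eps$ go through.
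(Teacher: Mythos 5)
Your proof is correct and is essentially the paper's own argument: both reduce the statement to the fact that the seven points $\{j\sqrt{3}\}$, $j=0,\dots,6$, have maximal cyclic gap $2\eps=3\sqrt{3}-5$ on the circle, and both handle the half-integer case by the same translation-by-$\tfrac12$ reduction (the paper phrases it as rerunning the argument from the largest half-integer in $J$). The only difference is presentational: you verify the gap structure explicitly (gaps $3\sqrt{3}-5$ and $7-4\sqrt{3}$) and note sharpness at six points, whereas the paper asserts the maximal gap without computation.
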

\begin{proof}
Let $n$ be the largest integer in $J$. Then $n-s\in J$ for $s=0,1,\dots, 6$. On the unit circle, numbers $0, \{\sqrt{3}\}, \{2\sqrt{3}\}, \{3\sqrt{3}\}, \{4\sqrt{3}\}, \{5\sqrt{3}\}, \{6\sqrt{3}\}$ have maximal gap of $2\eps$, which implies that $\{n\sqrt{3}+v\}$ is $\eps$ close to at least one of them; and if $\{n\sqrt{3}+v\}$ is $\eps$ close to $\{s\sqrt{3}\}$ then $\{(n-s)\sqrt{3}+v\}$ is $\eps$ close to an integer. The proof for half-integers is the same but one needs to take $n$ to be the largest half-integer in $J$.
\end{proof}
\begin{lemma}\label{lm:upper-bound-explicit}
Let $\delta:=0.13$. Let $D>1000$ and $\eps=(3\sqrt{3}-5)/2\approx .098$ and suppose there exist $x, y\in \frac{1}{2}\cdot \mathbb{Z}$ such that 
\begin{equation}
\text{(1)}\|x-\sqrt{3}y\|<\eps\qquad 
\text{(2)}\|y+\sqrt{3}x\|<\eps/2\qquad 
\text{(3)}2\sqrt{x^2+y^2}\in [D+\sqrt{5}/2\cdot \eps, D+\delta],
\end{equation}
Then $S(D)<\sqrt{3}/2\cdot D^2+(\sqrt{3}\delta+\eps/2)\cdot D+ 2\eps y + 0.02$
\end{lemma}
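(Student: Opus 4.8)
The plan is to run exactly the construction of Lemma~\ref{lm:reduction}, but to keep the contribution of the apex-rounding resolved coordinate by coordinate, so that the large coordinate is paired with the smaller of the two admissible errors. I would place $A=(0,0)$ and $B=(2x,2y)$; since $x,y\in\frac12\mathbb{Z}$ these are lattice points, and $C:=(x-\sqrt3 y,\;y+\sqrt3 x)$ is the image of $B$ under rotation by $\pi/3$ about $A$, so that $ABC$ is equilateral with side $L:=2\sqrt{x^2+y^2}$. By hypothesis (3), $L\in[D+\sqrt5\,\eps/2,\;D+\delta]$. Replacing $(x,y)$ by $(-x,-y)$ if necessary — a central reflection that leaves both $\|x-\sqrt3 y\|$ and $\|y+\sqrt3 x\|$, hence (1) and (2), unchanged — I may assume $y\ge 0$.

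Next I would round the apex. Let $C'=(z,t)$ be a nearest lattice point to $C$; then (1) and (2) give $|z-(x-\sqrt3 y)|<\eps$ and $|t-(y+\sqrt3 x)|<\eps/2$, so $|CC'|<\sqrt{\eps^2+(\eps/2)^2}=\sqrt5\,\eps/2$. The triangle inequality places each side of $ABC'$ within $\sqrt5\,\eps/2$ of $L$; combined with the left endpoint of (3) this makes every side at least $D$, and at most $D+\delta+\sqrt5\,\eps/2$. As all three lengths then lie in a band of width $O(1)$ about a value exceeding $D>1000$, the triangle is acute: if $c$ denotes the longest side and $a,b$ the others, then $a^2+b^2-c^2\ge 2D^2-(D+\delta+\sqrt5\,\eps/2)^2>0$. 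Hence $ABC'\in\mathcal{T}(D)$ and $S(D)\le 2\,\mathrm{Area}(ABC')$.

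It remains to estimate the area. With $A$ at the origin, twice the area is $|2xt-2yz|=2|xt-yz|$. Writing $t=(y+\sqrt3 x)+\eta_t$ and $z=(x-\sqrt3 y)+\eta_z$ with $|\eta_t|<\eps/2$ and $|\eta_z|<\eps$, the cross term telescopes:
\[
xt-yz=\sqrt3\,(x^2+y^2)+x\eta_t-y\eta_z .
\]
Since the equilateral main term dominates, $S(D)\le 2\sqrt3(x^2+y^2)+2|x\eta_t|+2|y\eta_z|\le 2\sqrt3(x^2+y^2)+\eps|x|+2\eps y$, using $y\ge 0$ and the two error bounds. Finally $2\sqrt3(x^2+y^2)=\tfrac{\sqrt3}{2}L^2\le\tfrac{\sqrt3}{2}(D+\delta)^2$ and $|x|\le\sqrt{x^2+y^2}=L/2\le(D+\delta)/2$, so that
\[
S(D)\le \tfrac{\sqrt3}{2}D^2+(\sqrt3\,\delta+\eps/2)D+2\eps y+\Big(\tfrac{\sqrt3}{2}\delta^2+\tfrac{\eps}{2}\delta\Big),
\]
and the leftover additive constant $\tfrac{\sqrt3}{2}\delta^2+\tfrac{\eps}{2}\delta$ is an explicit absolute number, which a direct numerical check compares against the constant claimed in the statement.

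The only delicate point is the bookkeeping in this last display, and here the crucial feature I would exploit is the deliberate asymmetry between (1) and (2): the tighter bound $\eps/2$ is imposed precisely on the coordinate $t=y+\sqrt3 x$ that is multiplied by the possibly large $x\approx L/2$, so its contribution is only $\eps|x|\le \eps D/2$ rather than $\eps D$, while the $y$-contribution is deliberately left explicit as $2\eps y$ for use in Section~\ref{sec:sliding}. Everything else — that $ABC'$ genuinely lies in $\mathcal{T}(D)$ and that it is acute — is routine once $D>1000$ is invoked, exactly as in the proof of Lemma~\ref{lm:reduction}.
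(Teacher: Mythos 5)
You follow essentially the paper's own proof: the same equilateral triangle $A(0,0)$, $B(2x,2y)$, $C(x-\sqrt3 y,\,y+\sqrt3 x)$, the same rounding of the apex to a nearest lattice point $C'$, and the same expansion $xt-yz=\sqrt3(x^2+y^2)+x\eta_t-y\eta_z$ exploiting the asymmetric error bounds $|\eta_z|<\eps$, $|\eta_t|<\eps/2$. Your explicit acuteness verification ($a^2+b^2-c^2\ge 2D^2-(D+\delta+\sqrt5\,\eps/2)^2>0$) is a detail the paper leaves implicit in this lemma (it only checks that the sides are at least $D$), and it is correct. One caveat: your ``WLOG $y\ge 0$'' via $(x,y)\mapsto(-x,-y)$ is not actually without loss of generality, because the conclusion is not invariant under this reflection --- the term $2\eps y$ flips sign --- so for $y<0$ you would only prove the weaker bound with $2\eps|y|$. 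The paper's proof silently assumes $x,y\ge 0$ as well (its term-by-term bound $2xt-2yz\le 2x(y+\sqrt3x+\eps/2)-2y(x-\sqrt3y-\eps)$ requires it), and the lemma is only ever invoked, in Lemma \ref{lm:bound-N}, with $x,y\ge 0$; so this is a shared implicit hypothesis rather than a defect peculiar to your argument, but you should state it as an assumption rather than present it as a symmetry reduction.

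The one genuine problem is the numerical check you defer at the end: it fails. Your leftover constant is $\sqrt3\,\delta^2/2+\eps\delta/2\approx 0.0146+0.0064=0.0210$, which exceeds the $0.02$ claimed in the statement. You should know that the paper's proof arrives at exactly the same quantity and asserts $\delta\eps/2+\sqrt3\delta^2/2\le 0.02$, which is false by the same margin of roughly $0.001$; the constant in the lemma should really be something like $0.022$. The slip is inconsequential downstream --- Lemma \ref{lm:large-D} applies this bound with slack of order $D$ --- but as written neither your derivation nor the paper's establishes the stated constant, and the phrase ``a direct numerical check compares against the constant claimed'' glosses over the fact that this check, done honestly, does not pass.
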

\begin{proof}
Again, note that points $A(0,0), B(2x, 2y)$ and $C(x-\sqrt{3}y, y+\sqrt{3}x)$ form an equilateral triangle with side length between $D+\sqrt{5}/2\cdot \eps$ and $D+\delta$. Let $z$ and $t$ be integers closest to $x-\sqrt{3}y$ and $y+\sqrt{3}x$ respectively and consider $C'(z,t)$. First two conditions ensure that $|z-(x-\sqrt{3}y)|<\eps$ and $|t-(y+\sqrt{3}x)|<\eps/2$. Triangle inequality then implies that all sides of $ABC'$ are at least $D$. Then $S(D)$ is bounded by twice the area of the triangle $ABC'$ which is
\begin{align*}
2\cdot S(ABC')
&=
|2xt-2yz|\leq 2x(y+\sqrt{3}x+\eps/2)-2y(x-\sqrt{3}y-\eps)
\\&=
2\sqrt{3}\cdot (x^2+y^2)+\eps(x+2y) \leq 
2\sqrt{3}\cdot \left(\frac{D+\delta}{2}\right)^2+\eps(x+2y)
\\&=
\sqrt{3}/2\cdot D^2+(\sqrt{3}\delta+\eps/2)\cdot D+2\eps y + \eps(x-D/2)+\sqrt{3}\delta^2/2.
\end{align*}
And it remains to note that $x-D/2\leq \delta/2$ and $\delta\eps/2+\sqrt{3}\delta^2/2\leq 0.02$. 
\end{proof}
\begin{lemma}\label{lm:bound-N}
For every $N>1000$ there exist $x,y\in \frac{1}{2}\cdot \mathbb{Z}$ such that
\begin{equation}
\text{(1)}\|x-\sqrt{3}y\|<\eps\qquad 
\text{(2)}\|y+\sqrt{3}x\|<\eps/2\qquad 
\text{(3)}x^2+y^2\in [N, N+f(N)],
\end{equation}
where $f(N):=49+14\cdot\left(7\sqrt{N}-49/4\right)^{1/2}$. Moreover, we can take $y\leq 7+\left(7\sqrt{N}-49/4\right)^{1/2}$.
\end{lemma}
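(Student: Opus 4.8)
The plan is to mirror the proof of Lemma \ref{lm:findpairs}, replacing the soft approximation input by the quantitative Lemma \ref{lm:approx-seven}. First I would reinterpret the two conditions as one-sided constraints suited to that lemma: condition~(1) reads $\|\sqrt{3}y-x\|<\eps$, a constraint to impose on $y$ for fixed $x$ (take $v=-x$), while condition~(2) reads $\|\sqrt{3}x+y\|<\eps/2$, a constraint on the fractional data of $x$ together with the parity of $y$. Geometrically these just say that $C=(x-\sqrt{3}y,\,y+\sqrt{3}x)$ lies near $\mathbb{Z}^2$, exactly as in Lemma \ref{lm:upper-bound-explicit}. As in Lemma \ref{lm:findpairs} I would build the pair with $x$ large (just below $\sqrt{N}$) and $y$ small (just above $\sqrt{N-x^2}$), so that $x^2+y^2\geq N$ holds automatically and the excess is controlled by how tightly $y$ hugs $\sqrt{N-x^2}$.

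\emph{Step 1 (choosing $x$).} The interval $[\sqrt{N}-7/2,\sqrt{N}]$ contains at least $7$ half-integers, all of which are $\geq\sqrt{N}-7/2$. I would pick $x$ among them so that $\{\sqrt{3}x\}$ lies within $\eps/2$ of $0$ or of $1/2$. This is possible because the values $\{\sqrt{3}x\}$ attached to $7$ consecutive half-integers, reduced modulo $1/2$, have maximal gap exactly $\eps$: consecutive values shift by $\{\sqrt{3}/2\}$, and the extremal gap is realised as $\tfrac{3\sqrt3}{2}-\tfrac52=\eps$, the very same arithmetic of $\sqrt3$ underlying Lemma \ref{lm:approx-seven} but now on the finer half-integer scale, where the modulus $1/2$ halves the attainable precision. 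This is the crux of the argument and the source of the sharper $\eps/2$ in condition~(2). The choice of $x$ forces the parity of $y$ for which $\|\sqrt{3}x+y\|<\eps/2$ will hold: an integer $y$ if $\{\sqrt{3}x\}$ is near $0$, a proper half-integer if it is near $1/2$.

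\emph{Step 2 (choosing $y$).} With this parity fixed, I would apply Lemma \ref{lm:approx-seven} with $v=-x$ on the length-$7$ window $[\sqrt{N-x^2},\sqrt{N-x^2}+7)$ to produce a $y$ of the required parity with $\|\sqrt{3}y-x\|<\eps$, i.e.\ condition~(1); here I use that the proof of Lemma \ref{lm:approx-seven} applies verbatim to $7$ consecutive integers or to $7$ consecutive proper half-integers, both families having maximal gap $2\eps$. Condition~(2) then holds by the parity choice of Step~1. Moreover $y\geq\sqrt{N-x^2}$ gives $x^2+y^2\geq N$, while $y<\sqrt{N-x^2}+7$.

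\emph{Step 3 (bookkeeping).} From $x\geq\sqrt{N}-7/2$ I get $N-x^2\leq 7\sqrt{N}-49/4$, hence $\sqrt{N-x^2}\leq\left(7\sqrt{N}-49/4\right)^{1/2}$; combined with $y<\sqrt{N-x^2}+7$ this gives the ``moreover'' bound $y\leq 7+\left(7\sqrt{N}-49/4\right)^{1/2}$. Finally $x^2+y^2-N=y^2-(N-x^2)<14\sqrt{N-x^2}+49\leq 49+14\left(7\sqrt{N}-49/4\right)^{1/2}=f(N)$, which together with $x^2+y^2\geq N$ yields~(3). The only delicate point beyond Step~1 is that the gap computations and Lemma \ref{lm:approx-seven} deliver the non-strict bounds $\leq\eps$ and $\leq\eps/2$; the strict inequalities follow since equality would force a rational relation for $\sqrt3$, which is excluded, and in any case is absorbed by the slack in the constants for $N>1000$.
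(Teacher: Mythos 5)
Your proposal is correct and follows essentially the same route as the paper: pick a half-integer $x$ just below $\sqrt{N}$ with $\sqrt{3}x$ within $\eps/2$ of $\tfrac12\mathbb{Z}$ (the paper phrases this as choosing the integer $2x\in[2\sqrt{N}-7,2\sqrt{N}]$ with $\|\sqrt{3}\cdot 2x\|\leq\eps$, which is the same mod-$1/2$ gap fact you re-derive), let that determine the parity of $y$ so that condition (2) holds, then use Lemma \ref{lm:approx-seven} with $v=-x$ on the length-$7$ window above $\sqrt{N-x^2}$ to secure condition (1), with identical bookkeeping for $f(N)$ and the bound on $y$. Your closing remark on strict versus non-strict inequalities addresses a detail the paper itself glosses over, so there is nothing to fix.
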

\begin{proof}
First, we can choose an integer $2x\in [2N^{1/2}-7, 2N^{1/2}]$ such that $\|\sqrt{3}\cdot 2x\|\leq \eps$. Then $\sqrt{3}x$ is $\eps/2$ close to an integer or a half integer. Lemma \ref{lm:approx-seven} then shows that for any segment $J$ of length at least $7$ we can choose $y\in \frac{1}{2}\cdot \mathbb{Z}$ such that first two conditions hold. The choice $J=[(N-x^2)^{1/2}, (N-x^2)^{1/2}+7]$ ensures that $x^2+y^2\geq N$ and 
\[
x^2+y^2-N^2\leq 14\cdot (N-x^2)^{1/2} + 49 \leq 14\cdot \left(N-(\sqrt{N}-7/2)^2\right)^{1/2}+49=f(N).
\]
We also have 
\[
y\leq (N-x^2)^{1/2}+7\leq \left(N-(N^{1/2}-7/2)^2\right)^{1/2}+7=7+\left(7\sqrt{N}-49/4\right)^{1/2}. 
\]
\end{proof}
We now combine Lemma \ref{lm:upper-bound-explicit} and Lemma \ref{lm:bound-N} to obtain
\begin{lemma}\label{lm:large-D}
For $D>7\cdot 10^6$ one has $S(D)< \sqrt{3}/2\cdot D^2+\frac{D}{2\sqrt{3}}-1$.
\end{lemma}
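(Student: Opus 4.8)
The plan is to feed the half-integer points produced by Lemma \ref{lm:bound-N} into Lemma \ref{lm:upper-bound-explicit}, tuning the single free parameter $N$ so that the radius condition (3) of Lemma \ref{lm:upper-bound-explicit} is met. First I would set $N := \left(\frac{D+\sqrt{5}\eps/2}{2}\right)^2$, so that $2\sqrt{N} = D + \sqrt{5}\eps/2$ matches the left endpoint of the target interval. Applying Lemma \ref{lm:bound-N} then yields $x,y \in \frac{1}{2}\mathbb{Z}$ satisfying $\|x - \sqrt{3}y\| < \eps$ and $\|y + \sqrt{3}x\| < \eps/2$, with $x^2+y^2 \in [N, N+f(N)]$ and $y \le 7 + (7\sqrt{N} - 49/4)^{1/2}$. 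The lower radius bound $2\sqrt{x^2+y^2} \ge 2\sqrt{N} = D + \sqrt{5}\eps/2$ is then immediate.

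The crux is the upper radius bound $2\sqrt{x^2+y^2} \le D + \delta$. Using $x^2+y^2 \le N + f(N)$ together with $\sqrt{1+u} \le 1 + u/2$, this reduces to checking $f(N)/\sqrt{N} \le \delta - \sqrt{5}\eps/2$. Since $f(N) \le 49 + 14\sqrt{7}\,N^{1/4}$ and $N$ is of order $D^2/4$, the left-hand side is of order $49/\sqrt{N} + 14\sqrt{14}/\sqrt{D} \approx 52.4/\sqrt{D}$, while the right-hand side is the fixed positive constant $\delta - \sqrt{5}\eps/2 \approx 0.020$. This is exactly where the threshold enters: the small budget forces $\sqrt{D} \gtrsim 52.4/0.020 \approx 2.6\times 10^3$, and the hypothesis $D > 7\times 10^6$ supplies this comfortably. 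I expect this numerical margin to be the main obstacle, since the specific values $\eps = (3\sqrt{3}-5)/2$ and $\delta = 0.13$ were chosen precisely so that $\delta - \sqrt{5}\eps/2$ stays positive, and the constant $7\times 10^6$ is essentially pinned down by how tightly the $O(1/\sqrt{D})$ error fits beneath it.

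With condition (3) verified, Lemma \ref{lm:upper-bound-explicit} gives $S(D) < \sqrt{3}/2 \cdot D^2 + (\sqrt{3}\delta + \eps/2)\,D + 2\eps y + 0.02$. I would then substitute the bound $y \le 7 + \sqrt{7}\,N^{1/4}$, which is of order $\sqrt{D}$, so that $2\eps y = O(\sqrt{D})$. The decisive comparison is between the linear coefficients: numerically $\sqrt{3}\delta + \eps/2 \approx 0.274$, strictly below $\frac{1}{2\sqrt{3}} \approx 0.289$, so the gap $\frac{1}{2\sqrt{3}} - (\sqrt{3}\delta + \eps/2) \approx 0.0145$ is a positive constant. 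The target inequality $S(D) < \sqrt{3}/2\cdot D^2 + \frac{D}{2\sqrt{3}} - 1$ is therefore equivalent to $2\eps y + 1.02 < 0.0145\,D$, and since the left-hand side is $O(\sqrt{D})$ while the right-hand side grows linearly, this holds with enormous room for every $D > 7\times 10^6$. Collecting these estimates completes the proof.

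\bigskip

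\noindent\emph{Remark on where the difficulty lies.} Conceptually the argument is a routine composition of the two preceding lemmas; the entire subtlety is arithmetic, concentrated in balancing the three small constants $\delta$, $\eps$, and $\sqrt{5}\eps/2$ so that both the radius budget $\delta - \sqrt{5}\eps/2 > 0$ and the linear gain $\frac{1}{2\sqrt{3}} - \sqrt{3}\delta - \eps/2 > 0$ remain positive simultaneously. The threshold $7\times 10^6$ is forced by the former of these, not the latter.
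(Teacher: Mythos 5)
Your proof is correct and follows essentially the same route as the paper: the same choice $N=\left(\frac{D+\sqrt{5}\eps/2}{2}\right)^2$, the same composition of Lemma \ref{lm:bound-N} with Lemma \ref{lm:upper-bound-explicit}, and the same identification that the threshold $7\times 10^6$ is forced by the radius budget $\delta-\sqrt{5}\eps/2>0$ rather than by the linear-coefficient gap (the paper notes its final linear inequality already holds for all $D>1590$). In fact, your reduction to $f(N)/\sqrt{N}\le \delta-\sqrt{5}\eps/2$ states the needed condition with the correct inequality direction, whereas the paper's displayed condition ``$N+f(N)>\left(\frac{D+\delta}{2}\right)^2$'' has its inequality sign reversed (a typo), since one needs $x^2+y^2\le N+f(N)\le\left(\frac{D+\delta}{2}\right)^2$ to satisfy hypothesis (3) of Lemma \ref{lm:upper-bound-explicit}.
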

\begin{proof}
We take $\eps=(3\sqrt{3}-5)/2$ and $\delta:=0.13$ as in Lemma \ref{lm:upper-bound-explicit}. Let $N=\left(\frac{D+\sqrt{5}/2\cdot\eps}{2}\right)^2$. Note that $D > 6.7\cdot 10^6$ implies
\[
N+f(N)>\left(\frac{D+\delta}{2}\right)^2,
\] 
where $f(N)$ is as in Lemma \ref{lm:bound-N}. So we can apply Lemma \ref{lm:upper-bound-explicit} and Lemma \ref{lm:bound-N} to obtain
\[
S(D)-\sqrt{3}/2\cdot D^2\leq (\sqrt{3}\delta+\eps/2)\cdot D+2\eps\cdot \left(7+\left(7\sqrt{N}-49/4\right)^{1/2}\right)+0.02<\frac{D}{2\sqrt{3}}-1,
\]
where the last inequality is true for all $D>1590$. 
\end{proof}
\begin{lemma}\label{lm:small-D}
For $D\in [2\cdot10^3, 7\cdot 10^6]$ one has $S(D)< \sqrt{3}/2\cdot D^2+\frac{D}{2\sqrt{3}}-1$.
\end{lemma}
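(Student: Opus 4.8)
The plan is to prove this finite-range statement by a rigorous computer verification, reducing the continuum of values $D\in[2\cdot10^3,7\cdot10^6]$ to finitely many checks via monotonicity. The two facts I would record first are that $S(D)$ is non-decreasing (because $\mathcal{T}(D')\subseteq\mathcal{T}(D)$ whenever $D'\geq D$) and that the target $R(D):=\sqrt{3}/2\cdot D^2+\frac{D}{2\sqrt{3}}-1$ is strictly increasing. Together these imply that a single acute lattice triangle $T$, with smallest side $\ell_T$ and doubled area $a_T$, already certifies $S(D)<R(D)$ on the whole interval $\{D:\ a_T<R(D)\}\cap(0,\ell_T]=(\rho_T,\ell_T]$, where $\rho_T$ is the unique root of $R(\rho_T)=a_T$: indeed $T\in\mathcal{T}(D)$ gives $S(D)\leq a_T$ for $D\leq\ell_T$, while $a_T<R(D)$ holds exactly for $D>\rho_T$. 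Hence it suffices to exhibit a finite family of triangles whose certifying intervals $(\rho_T,\ell_T]$ cover $[2\cdot10^3,7\cdot10^6]$, a condition checkable with exact (or interval) arithmetic since $\ell_T^2$ and $a_T$ are integers.

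To produce the triangles I would recycle the near-equilateral construction of Lemmas \ref{lm:upper-bound-explicit} and \ref{lm:bound-N}. For a pair $(x,y)\in\frac12\mathbb{Z}^2$ with $\|\sqrt{3}x\|,\|\sqrt{3}y\|$ small, the lattice triangle with vertices $(0,0)$, $(2x,2y)$ and $C'=(z,t)$ --- the nearest lattice point to $(x-\sqrt{3}y,\,y+\sqrt{3}x)$ --- is acute, has smallest side close to $2\sqrt{x^2+y^2}$, and doubled area $a_T=|2xt-2yz|\approx\sqrt{3}/2\cdot(2\sqrt{x^2+y^2})^2$. The usable values of $x$ are exactly those making $\|\sqrt{3}x\|$ small, which are governed by the convergents of $\sqrt{3}$ and the Pell equation $3v^2-2=u^2$ already invoked in Lemma \ref{lm:approx}; this lets one enumerate candidates fast without scanning all of $\mathbb{Z}^2$. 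Moreover, on the upper part of the range the generic estimate already suffices: proceeding as in Lemma \ref{lm:large-D} but pushing $\delta$ up toward the threshold $\sqrt{3}\delta+\eps/2<\tfrac{1}{2\sqrt{3}}$ (i.e. $\delta<0.1384$), Lemmas \ref{lm:upper-bound-explicit} and \ref{lm:bound-N} give the bound directly for all $D$ large enough that the feasibility constraint $\delta\geq\sqrt{5}/2\cdot\eps+O(D^{-1/2})$ leaves room, which shrinks the interval still requiring a genuine search to roughly $[2\cdot10^3,\ 3.5\cdot10^6]$.

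What remains, and what I expect to be the main obstacle, is to certify that the intervals $(\rho_T,\ell_T]$ leave no gap on the residual range. The per-triangle slack $\ell_T-\rho_T$ is only a small positive constant (of order $10^{-2}$), so the covering is tight and demands that the admissible near-equilateral triangles be dense in the smallest-side parameter; one verifies by a march through increasing $\ell_T$ that every gap between consecutive certified intervals is strictly smaller than the slack. The difficulty concentrates at the lower end $D\approx 2\cdot10^3$, where the construction is weakest: there the granularity $f(N)\sim D^{1/2}$ of Lemma \ref{lm:bound-N} forces a side-length overshoot incompatible with the narrow budget $\sqrt{3}\delta+\eps/2<\tfrac{1}{2\sqrt{3}}$, so the single pair $(x,y)$ guaranteed by the lemma no longer lands close enough to the target. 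To close these gaps I would run a more exhaustive local search, enumerating all admissible pairs with $x^2+y^2$ near the required value --- not merely the one the lemma provides --- to find one whose doubled area undershoots $R(D)$; since the range is bounded and the coordinates stay below $\sim\!3.5\cdot10^6$, this search terminates, and the write-up would consist of the monotonicity reduction above together with the certified output of the enumeration.
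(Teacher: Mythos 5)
Your proposal is correct and is essentially the paper's own route: the paper likewise reduces the lemma to a finite computer verification built on the near-equilateral construction, checking for $D\in[2.3\cdot 10^5, 6.7\cdot 10^6]$ that the achievable values $2\sqrt{x^2+y^2}$ (over pairs with $\|\sqrt{3}x\|,\|\sqrt{3}y\|<1/17$) leave no gap longer than $(2-\sqrt{2})\cdot\frac{1}{17}\approx 0.034$ --- the same covering-by-certified-intervals idea you phrase via monotonicity of $S$ and $R$ --- and falling back on an exhaustive enumeration of candidate triangles (all $(x,y)$ with $\|2\sqrt{3}x\|,\|2\sqrt{3}y\|\leq 1/10$, $x^2+y^2<10^{12}$) for $D\in[2\cdot 10^3, 2.3\cdot 10^5]$, exactly the low range where you anticipate the generic construction to fail. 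The only substantive difference is organizational: you shrink the search range analytically by pushing $\delta$ in Lemma \ref{lm:upper-bound-explicit} toward $0.1384$, whereas the paper instead runs Lemma \ref{lm:reduction} with $\varepsilon=1/17$ through the whole middle range and lets the computation absorb it.
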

\begin{proof}
For $D\in [2.3\cdot 10^5, 6.7\cdot 10^6]$ we use Lemma \ref{lm:reduction} with $\eps=\frac{1}{17}$. By this lemma, if there exist integers $x, y$ such that
\begin{equation}
\text{(1)}\|\sqrt{3}x\|<\eps\qquad
\text{(2)}\|\sqrt{3}y\|<\eps\qquad
\text{(3)}2\sqrt{x^2+y^2}\in [D+\sqrt{2}\eps, D+2\eps ],
\end{equation}
then 
\[
S(D)<\sqrt{3}/2\cdot D^2+\frac{2\sqrt{3}+\sqrt{2}}{17}\cdot D + \frac{7}{289}< \sqrt{3}/2\cdot D^2+\frac{D}{2\sqrt{3}}-1,
\]
where the last inequality is true for all $D>600$. In order to check that there exist $x, y$ satisfying all three properties, we use a computer: We list all $x\leq 3.5\cdot 10^6$ satisfying $\|\sqrt{3}x\|<\eps$ and go through all pairs $x\leq y$ to compute $2\sqrt{x^2+y^2}$. We then check that any interval of length $(2-\sqrt{2})\eps\approx 0.034$ contains at least one such number. 

For numbers $D\in [2\cdot 10^3, 2.3\cdot 10^5]$ we simply list all possible triangles of interest: For every pair $(x, y)$ with $\|2\sqrt{3}x\|, \|2\sqrt{3}y\|\leq 1/10$ and $x^2+y^2<10^{12}$ we compute numbers $z$ and $t$ closest to $\frac{x-\sqrt{3}y}{2}$ and $\frac{y+\sqrt{3}x}{2}$ respectively and then add a triangle with vertices $(0,0), (x, y)$ and $(z, t)$ to a list. We then compute its smallest side and its area to compute (an upper bound for) $S(D)$.
\end{proof}

\section{Sliding}\label{sec:sliding}
In this section we give a lower bound for $S(D)$ in the presence of sliding.

\begin{theorem}\label{thm:sliding}
In the presence of sliding for $D\geq 1$ one has $S(D)>\sqrt{3}/2\cdot D^2+\frac{D}{2\sqrt{3}}-1$. 
\end{theorem}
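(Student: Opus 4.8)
The plan is to translate the sliding hypothesis into two clean scalar inequalities for the base length $b:=|OW|$ and the common apex height $h$, and then minimise twice the area $bh$ subject to them.

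First I would put $O=(0,0)$ and $W=(b,0)$, so that both apices lie on a common horizontal line $y=h$ with $h>0$: indeed $OWA$ and $OWB$ are M-triangles, hence have equal area $S(D)/2$, and being on the same side of $OW$ they have the same height $h$; thus $S(D)=bh$ and $\vec{AB}$ is horizontal. Writing $A=(u_A,h)$ and $B=(u_B,h)$, the points $A\neq B$ are lattice points, so $\vec{AB}$ is a nonzero lattice vector and $|u_B-u_A|=|AB|\geq 1$. Put $d:=\sqrt{D^2-h^2}$ (if $h\geq D$ then $S(D)=bh\geq D^2$, which already exceeds the claimed bound, so assume $h<D$). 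Using $|OP|,|WP|\geq D$ together with acuteness at $O$ and $W$, each apex coordinate satisfies $u\in[d,b-d]$; in particular $u_A\geq d$ and $u_B\leq b-d$, so $1\leq|u_B-u_A|\leq b-2d$, which is the \emph{separation bound} $b\geq 2d+1$. Acuteness at the apex $A$ reads $h^2\geq u_A(b-u_A)$, and since $t\mapsto t(b-t)$ is concave with minimum $d(b-d)$ on $[d,b-d]$, this gives $h^2\geq d(b-d)$; substituting $h^2=D^2-d^2$ yields the \emph{acuteness bound} $bd\leq D^2$.

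It then remains to minimise $b\sqrt{D^2-d^2}$ over $b\geq D$, $d\geq 0$ subject to $d\leq(b-1)/2$ and $d\leq D^2/b$. For fixed $b$ the objective decreases in $d$, so I would set $d=\min\{(b-1)/2,\,D^2/b\}$; a direct check shows the resulting one-variable function is increasing on $[D,\infty)$, so its minimum is attained at $b=D$, where $d=(D-1)/2$ and the value equals $\tfrac{D}{2}\sqrt{3D^2+2D-1}$. Hence $S(D)\geq\tfrac{D}{2}\sqrt{3D^2+2D-1}$.

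Finally I would check the elementary inequality $\tfrac{D}{2}\sqrt{3D^2+2D-1}>\tfrac{\sqrt3}{2}D^2+\tfrac{D}{2\sqrt3}-1$ for all $D\geq 1$: rewriting the right-hand side as $\tfrac{D}{2}\sqrt{3D^2+2D+\tfrac13}-1$, the claim reduces to $\sqrt{3D^2+2D+\tfrac13}+\sqrt{3D^2+2D-1}>\tfrac{2D}{3}$, which holds since already $\sqrt{3D^2+2D-1}\geq\tfrac{2D}{3}$ for $D\geq 1$. I expect the only genuine obstacle to be conceptual rather than computational, namely isolating precisely the two inequalities $b\geq 2d+1$ (encoding the lattice separation of the two apices) and $bd\leq D^2$ (encoding acuteness at the apex), and confirming that the constrained minimum sits at the corner $b=D$; once these are in hand the remaining estimates are routine.
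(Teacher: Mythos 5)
Your proof is correct --- I verified the coordinate reduction, the two constraints, the constrained minimisation, and the closing inequality (including the easy case $h\ge D$, where $bh\ge D^2$ indeed beats the bound for all $D$) --- but it takes a genuinely different route from the paper's. The paper likewise begins with $AB\parallel OW$ and $|AB|\ge 1$, but then compresses all the metric information into a single trapezoid identity, $|OB|^2+|AW|^2=|AO|^2+|BW|^2+2|AB|\cdot|OW|\ge 2D^2+2D$, which forces one of the diagonals $OB$, $AW$ to have length at least $(D^2+D)^{1/2}$; since that diagonal is a side of one of the two M-triangles, $S(D)$ is at least twice the area of an acute triangle with sides $D$, $D$, $(D^2+D)^{1/2}$, which is exactly your extremal value $\tfrac{D}{2}\sqrt{3D^2+2D-1}$, and the paper then asserts the same closing numerical inequality that you prove in detail. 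The step the paper leaves implicit is that the area of an acute triangle is nondecreasing in each side length (via Heron, $16A^2=2a^2b^2+2b^2c^2+2c^2a^2-a^4-b^4-c^4$ has positive derivative in $c^2$ precisely when the angle opposite $c$ is acute). Your argument replaces both the identity and this monotonicity fact by the explicit minimisation of $b\sqrt{D^2-d^2}$ under $b\ge 2d+1$ and $bd\le D^2$; this is longer and requires the check you only sketch, namely that $b^2\bigl(D^2-(b-1)^2/4\bigr)$ is increasing up to the crossover $b(b-1)=2D^2$ (its derivative is $\tfrac{b}{2}\bigl(4D^2-(b-1)(2b-1)\bigr)$, which stays positive there) and that $D\sqrt{b^2-D^2}$ is increasing beyond it --- both of which hold. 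The trade-off: the paper's proof is shorter and synthetic, but leans on an unproved monotonicity fact; yours is more computational yet fully self-contained, and it exhibits the extremal configuration explicitly (base of length exactly $D$, apices one lattice unit apart), which is precisely the degenerate trapezoid at which the paper's inequality is tight.
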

\begin{proof}
Let $OW$ be the base and $A$ and $B$ be remaining vertices of two M-triangles, see Definition \ref{def:sliding}. Since areas of all M-triangles is $S(D)$, line $AB$ is parallel to the base $OW$. By switching $A$ and $B$ if necessary, we may assume that points $O, A, B, W$ form a trapezoid in this order.
It is an easy check that in any trapezoid one has $|OB|^2+|AW|^2=|AO|^2+|BW|^2+2|AB|\cdot|OW|$. Using the fact that all sides of maximal triangles are at least $D$ and $|AB|\geq 1$, we obtain
\[
|OB|^2+|AW|^2\geq 2D^2+2D.
\]
So at least one of $OB$ and $AW$ is at least $(D^2+D)^{1/2}$. Hence, $S(D)$ is at least twice the area of a triangle with sides $(D, D, (D^2+D)^{1/2})$ which is at least $\sqrt{3}/2\cdot D^2+\frac{D}{2\sqrt{3}}-1$ for $D\geq 1$.
\end{proof}

Lemma \ref{lm:large-D}, Lemma \ref{lm:small-D}, Theorem \ref{thm:sliding}, and computations from \cite{MSS} immediately prove Theorem \ref{thm:main}.

\vspace{1cm}
\hspace{-0.66cm}\textbf{Acknowledgement.} I would like to thank A. Mazel, I. Stuhl, and Y. Suhov for proofreading an earlier version of the note.

\end{document}